%
%
%


\documentclass[10pt]{amsart}

\usepackage{amssymb}

\usepackage{graphicx}

\usepackage[cmtip,all]{xy}

\usepackage{}
\usepackage{bm}       
\usepackage{bbm}       
\usepackage{mathtools} 
\usepackage{colonequals}
\usepackage{numprint} 
\usepackage[shortlabels]{enumitem}
\usepackage{chngcntr}
\usepackage{pgfplots}
\pgfplotsset{compat=1.15}
\usepackage{tikz}
\usepackage{tikz-cd}
\usepackage{xcolor}
\usepackage{color}
\usepackage[pdfencoding=auto, psdextra, breaklinks=true]{hyperref}
\definecolor{mylinkcolor}{rgb}{0.5,0.0,0.0}
\definecolor{myurlcolor}{rgb}{0.0,0.0,0.7}
\hypersetup{
 colorlinks,
 urlcolor=myurlcolor,
 citecolor=myurlcolor,
 linkcolor=mylinkcolor,
 breaklinks=true,
 pdfauthor={Shiva Chidambaram}
}
\usepackage[capitalize]{cleveref}




\newtheorem{theorem}{Theorem}[section]

\theoremstyle{definition}
\newtheorem{definition}[theorem]{Definition}
\newtheorem{example}[theorem]{Example}

\newtheorem{algorithm}[theorem]{Algorithm}
\newtheorem{prop}[theorem]{Proposition}

\theoremstyle{remark}
\newtheorem{remark}[theorem]{Remark}
\newtheorem{question}[theorem]{Question}

\numberwithin{equation}{section}

\def\Gal{\mathrm{Gal}}
\def\Jac{\mathrm{Jac}}

\def\Frob{\mathrm{Frob}}

\DeclareMathOperator{\GL}{GL}
\DeclareMathOperator{\GSp}{GSp}
\DeclareMathOperator{\Sp}{Sp}

\newcommand{\MA}{\mathcal{A}}

\newcommand{\F}{\mathbb{F}}

\newcommand{\Q}{\mathbb{Q}}

\newcommand{\Z}{\mathbb{Z}}
\newcommand{\calX}{\mathcal{X}}
\newcommand{\calK}{\mathcal{K}}

\newcommand{\calJ}{\mathcal{J}}
\newcommand{\Fbar}{\overline{\F}}
\newcommand{\Qbar}{\overline{\Q}}

\newcommand\im{\mathrm{im}}
\newcommand\rhobar{\overline{\rho}}

\newcommand{\tsim}{\text{sim}}

\DeclareFontFamily{U}{wncy}{}
\DeclareFontShape{U}{wncy}{m}{n}{<->wncyr10}{}
\DeclareSymbolFont{mcy}{U}{wncy}{m}{n}
\DeclareMathSymbol{\Sh}{\mathord}{mcy}{"58}

\begin{document}

\title[Mod-$3$ Galois images of principally polarized abelian surfaces]{Computing the mod-$3$ Galois image of a principally polarized abelian surface over the rationals}


\author[S. Chidambaram]{Shiva Chidambaram}
\address{Department of Mathematics, University of Wisconsin-Madison, Madison WI 53706-1325, USA}
\email{chidambaram3@wisc.edu}
\thanks{The author was supported by Simons Foundation grant 550033.}


\subjclass[2020]{Primary 11F80; 11G10}

\date{}

\begin{abstract}
    A lot of work has gone into computing images of Galois representations coming from elliptic curves. This article presents an algorithm to determine the image of the mod-$3$ Galois representation associated to a principally polarized abelian surface over $\Q$. Conjugacy class distribution of subgroups of $\GSp(4,\F_3)$ is a key ingredient. While this ingredient is feasible to compute for $\GSp(4,\F_\ell)$ for any small prime $\ell$, distinguishing Gassmann-equivalent subgroups is a delicate problem. We accomplish it for $\ell = 3$ using several techniques. The algorithm does not require the knowledge of endomorphisms.
\end{abstract}

\maketitle


\section{Introduction}
\label{sec:intro}
Given an elliptic curve $E/\Q$, it is a fundamental problem to understand the Galois action on its $\ell$-torsion points for a prime $\ell$. This leads to the concrete question of determining the image of the associated mod-$\ell$ Galois representation $\rhobar_{E,\ell} : G_{\Q} \rightarrow \GL(2,\F_\ell)$ up to conjugacy in $\GL(2,\F_\ell)$. There are efficient algorithms \cite{Sut16} accomplishing this task. Moreover, when $E$ does not have complex multiplication, the work \cite{RSZB22} determines the $\ell$-adic Galois image, and Zywina goes further and computes the full adelic Galois image \cite{Zyw22}.

A natural next step is to tackle this problem for a principally polarized abelian surface $A/\Q$. For any prime $\ell$, let $G_\ell = \GSp(4,\F_\ell)$. The $\ell$-torsion subgroup $A[\ell]$ is a $4$-dimensional vector space over $\F_\ell$, with a non-degenerate, Galois covariant, alternating pairing -- the Weil pairing. Thus the Galois action leads to a mod-$\ell$ Galois representation $\rhobar := \rhobar_{A,\ell} : G_\Q \rightarrow G_\ell$, such that composing with the similitude character $\chi_{\tsim} : G_\ell \rightarrow \F_\ell^\times$ gives the mod-$\ell$ cyclotomic character $\chi_\ell$. Given $A/\Q$ and $\ell$, the task is to determine the mod-$\ell$ Galois image $\im(\rhobar_{A,\ell}) \subseteq G_\ell$ up to conjugacy. 
\begin{remark}
    When $A_{\Qbar}$ has no extra endomorphisms, Serre's open image theorem implies that $\im(\rhobar_{A,\ell}) \neq G_\ell$ for finitely many primes $\ell$. Dieulefait gave an algorithm to determine a finite set containing the non-surjective primes \cite{Die02, BBKKMSV24}. Building on this, \cite{vBCCK24} gives a complete algorithm for computing the rational isogeny class of $A$.
\end{remark}
Since $\chi_\ell = \chi_{\tsim} \circ \rhobar$ is surjective, and complex conjugation is an order $2$ element $c \in G_{\Q}$ with $\chi_\ell(c) = -1$, there are constraints on a subgroup $H \subseteq G_\ell$ to be the mod-$\ell$ Galois image. To be precise, $H$ must be an eligible subgroup (see \cref{def:eligible}).
\begin{definition}
    \label{def:eligible}
    A subgroup $H$ of $G_\ell$ is said to be eligible if the restriction of $\chi_{\tsim}$ to $H$ is surjective, and there exists $x \in H$ of order $2$ such that $\chi_{\tsim}(x) = -1$.
\end{definition}

We develop a method to completely accomplish this task when $\ell = 3$. It works for all principally polarized abelian surfaces without any restriction on endomorphism type. It consists of a probabilistic Monte Carlo \cref{alg:GassmannClass} that determines the conjugacy class distribution of the image of $\rhobar_{A,3}$, followed by a deterministic \cref{alg:distinguish} that determines $\im(\rhobar_{A,3})$ up to $G_3$-conjugacy. We present the algorithms for Jacobians of genus $2$ curves, but they also apply for Weil restrictions of an elliptic curve over a quadratic field.
They also apply for curves over any number field $K$ not containing $\zeta_3$. If $\zeta_3 \in K$, the eligible subgroups are now those contained in $\Sp(4,\F_3)$ and \cref{alg:distinguish} will then require an analysis of these subgroups.

Magma \cite{Magma} code implementing the algorithm is available at the GitHub repository \cite{Chidambaram_Mod3Image}. The main intrinsic is called \texttt{mod3Galoisimage}, which takes a genus $2$ curve as input, and optionally a lower bound for probability $\epsilon$, and two prime bounds $B_1, B_2$ (see \cref{alg:GassmannClass}). Worked out examples can be found in the file \texttt{examples.m}.
We have used it to compute the mod-$3$ Galois images for all \numprint{66158} genus $2$ curves in the L-functions and Modular Forms Database LMFDB \cite{lmfdb}, and for a bigger dataset consisting of \numprint{487493} curves with $5$-smooth conductors. \cref{sec:ellis3} presents our main algorithms and \cref{sec:ellgt3} discusses the challenges for $\ell > 3$.

Any representation $\rhobar : G_\Q \rightarrow G_3$ with $\chi_\tsim \circ \rhobar = \chi_3$ is known to arise as the mod-$3$ Galois representation of infinitely many prinicipally polarized abelian surfaces over $\Q$ \cite{BCGP}. This is because the corresponding twist of the Siegel modular variety $\MA_2(3)$, which may not be rational over $\Q$ \cite{CC}, is nevertheless unirational over $\Q$. 
Consequently any eligible subgroup of $G_3$ arises as the mod-$3$ Galois image.
Up to conjugacy $G_3$ has $492$ subgroups; $330$ of them are not contained in $\Sp(4,\F_3)$ and $280$ of them are eligible subgroups.
This leads to the following question.
\begin{question}
    \label{question:realize}
    Can we \emph{explicitly} realize each of the $280$ eligible subgroups of $G_3$ as the mod-$3$ Galois image of some principally polarized abelian surface over $\Q$?
\end{question}
The work \cite{BF22} outlines a method to construct a genus $2$ curve starting from a mod-$3$ Galois representation $\rhobar$. It can be applied to realize eligible subgroups of $G_3$ of small order as mod-$3$ Galois images. 
\cref{sec:realize} discusses progress towards this.

\textbf{Notation:} Throughout this article, we use the LMFDB labeling scheme \cite{gpsgsp4labelknowl} for referring to subgroups of $G_\ell$. The label is a string \texttt{$\ell$.i.n}, where $\texttt{i}$ is the index in $G_\ell$, and $\texttt{n}$ is a counter giving a canonical ordering of all index-$i$ subgroups of $G_\ell$.

\textbf{Acknowledgement:}
The author was supported by Simons Foundation grant 550033. I thank Andrew Sutherland, David Roberts and Nils Bruin for valuable discussions, Noam Elkies for \cref{eg:index80subgroup}, and the referees whose inputs greatly improved the article, and simplified \cref{eg:verysmallimage}.

\section{Computing the mod-$3$ Galois image}
\label{sec:ellis3}
Suppose $A$ is given as the Jacobian of a genus $2$ curve $C/\Q : y^2 = f(x)$ with $\deg(f) = 5$ or $6$. Let $S$ denote the set of primes of bad reduction for $A$. 
We want to determine $\im(\rhobar_{A,3}) \subseteq G_3$ up to conjugacy. This group is abstractly isomorphic to the Galois group of the $3$-torsion field. Although the latter can be computed from a $3$-division polynomial, it is not enough to determine $\im(\rhobar_{A,3})$ up to $G_3$-conjugacy. The difficulty comes from the presence of outer automorphisms.
\cref{ellis2} discusses this issue in the simpler case $\ell = 2$.

\begin{remark}
\label{ellis2}
For $\ell = 2$, since the $2$-torsion field of $A$ is the splitting field of $f(x)$, the mod-$2$ Galois image is abstractly isomorphic to the Galois group $\Gal(f) \subseteq S_6 \simeq G_2$. But the identification $S_6 \simeq G_2$ must be chosen carefully since $S_6$ has an outer automorphism. This is done, for instance, by noting that pairs of Weierstrass points on $C$ correspond to points in $A[2]$, and matching the two conjugacy classes of order-$48$ subgroups correctly: the subgroup of $S_6$ that has orbits of size $2$ and $4$ must match with the subgroup of $G_2$ having a fixed point, while the other transitive subgroup of $S_6$ must match with the one having no fixed points.
\end{remark}

\subsection{Computing the Gassmann-equivalence class.}
When computing any Galois group, the Chebotarev density theorem is a very useful tool. Suppose 
the Galois group is a subgroup of a group $G$. 
Then one computes the conjugacy class of $\Frob_p$ for all unramified primes $p \leq X$. This sampled frequency is compared against the conjugacy class distributions of all subgroups of $G$, and ideally this pins down the Galois group with high certainty, since Chebotarev guarantees that the distribution of $\Frob_p$ for $p \leq X$ matches that of the Galois group as $X \rightarrow \infty$.

This general approach yields the probabilistic method of \cref{alg:GassmannClass}, wherein we prune out subgroups whose conjugacy class distribution is unlikely to yield the sampled Frobenius distribution. Since computing the Frobenius conjugacy class in $G_3$ is intensive, we do it only when needed. 
Instead we focus on two essential (and efficiently computable) features, together called the signature of a conjugacy class in $G_3$: the characteristic polynomial, and the dimension of the $1$-eigenspace.
\begin{definition}
    For a prime $p$ of good reduction for $A$, the Frobenius signature of $A$ at $p$ is defined to be the tuple $\langle L_p(A,t) \pmod 3, \dim_{\F_p} A[3](\F_p) \rangle$, where $L_p(A,t)$ is the Euler factor of $A$ at $p$. Note that $\det(I - \rhobar_{A,3}(\Frob_p)t) \equiv L_p(A,t) \pmod 3$.
\end{definition}
Although \cref{alg:GassmannClass} goes far, it cannot always determine the mod-$3$ Galois image due to the existence of non-conjugate but Gassmann-equivalent \cite[Def 2.8]{Sut21} subgroups, i.e., subgroups giving rise to the same conjugacy class distribution. The $280$ eligible subgroups of $G_3$ give rise to $230$ distinct conjugacy class distributions. These are $38$ pairs, $3$ triples and $2$ quadruples of Gassmann-equivalent subgroups.

\begin{algorithm}
    \label{alg:GassmannClass}
    {\em Input:}
    \begin{itemize}
        \item a genus 2 curve $C$ over $\Q$ such that $A = \Jac(C)$
        \item a positive real number $\epsilon$ close to $0$; two integers $B_1 \geq B_2 \geq 1$.
        \item a pre-computed list $L$ of the $280$ eligible subgroups of $G_3$ along with their signature distributions and conjugacy class distributions.
    \end{itemize}
    {\em Output:} the Gassmann-equivalence class of subgroups of $G_3$ containing $\im (\rhobar_{A,3})$.
    \begin{enumerate}[label={\bf Step \arabic*},labelwidth=\widthof{{\bf Step 2.}},leftmargin=!]
    \item \label{step:sampleFrobsign} For each prime $p \not \in S \cup \{3\}, p \leq B_1$, compute the Frobenius signature.
    \item Using the pre-computed list of signature distributions, apply Bayes' rule to calculate for each $H \in L$ the probablity that $\im(\rhobar_{A,3}) = H$ given the sampled Frobenius sign distribution from \ref{step:sampleFrobsign}.
    \item Remove those $H \in L$ whose probability is smaller than $\epsilon$. If the remaining subgroups form a unique Gassmann-equivalence class, return it.
    \item  \label{step:sampleFrob} Otherwise, for each prime $p \not \in S \cup \{3\}, p \leq B_2$, compute the conjugacy class of $\rhobar_{A,3}(\Frob_p) \in G_3$ by computing $A[3](\Fbar_p)$ as a subgroup of $A(\F_{p^k})$ for some $k \geq 1$, and constructing a symplectic basis of $A[3]$.
    \item  Using the pre-computed list of conjugacy class distributions, apply Bayes' rule to calculate for each remaining $H \in L$ the probablity that $\im(\rhobar_{A,3}) = H$ given the sampled Frobenius conjugacy class distribution from \ref{step:sampleFrob}.
    \item Remove those $H$ whose probability is smaller than $\epsilon$. If the remaining subgroups form a unique Gassmann-equivalence class, return it.
    \end{enumerate}
\end{algorithm}

\begin{prop}
\label{prop:GassmannClass}
For any $0 < \epsilon < \frac{1}{4}$,
there exists $B$ such that \cref{alg:GassmannClass} with $B_i \geq B$ returns the Gassmann-equivalence class containing $\im(\rhobar_{A,3})$.
\end{prop}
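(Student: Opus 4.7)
The plan is to combine the Chebotarev density theorem with a law-of-large-numbers analysis of the Bayesian posteriors used in Steps 2, 3, 5 and 6. Write $H_0 = \im(\rhobar_{A,3})$, denote by $\mathcal{G}$ its Gassmann-equivalence class inside $L$, and by $\mathcal{S} \supseteq \mathcal{G}$ the set of eligible subgroups whose signature distribution matches that of $H_0$. The containment holds because the signature of an element is a conjugation-invariant function, so Gassmann-equivalent subgroups automatically share the same signature distribution. By the classification recalled before the statement (at most $2$ quadruples of Gassmann-equivalent subgroups), $|\mathcal{G}| \leq 4$.

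First I would analyse Step 3 asymptotically. Writing $\sigma_H$ for the signature distribution of $H$ and taking a uniform prior on $L$, Bayes' rule yields
$$\log P\bigl(\im(\rhobar_{A,3}) = H' \,\big|\, \text{signatures at } p \leq B_1\bigr) \;=\; \sum_s n_s \log \sigma_{H'}(s) \;+\; C,$$
where $n_s$ counts the sampled primes with signature $s$ and $C$ depends only on the sample. By the Chebotarev density theorem, $n_s/n \to \sigma_{H_0}(s)$ as $B_1 \to \infty$, so the normalised log-likelihood converges to $-h(\sigma_{H_0}) - D(\sigma_{H_0} \,\|\, \sigma_{H'})$, where $h$ denotes Shannon entropy and $D$ the Kullback--Leibler divergence. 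Since $D \geq 0$ with equality iff $\sigma_{H'} = \sigma_{H_0}$, the posterior concentrates on $\mathcal{S}$: for $H' \in \mathcal{S}$ it tends to $1/|\mathcal{S}|$, while for $H' \notin \mathcal{S}$ it decays exponentially to $0$. An identical argument, with $\kappa_H$ in place of $\sigma_H$ and $\mathcal{G}$ in place of $\mathcal{S}$, handles Steps 4--6: restricted to the survivors of Step 3, the posterior of each $H' \in \mathcal{G}$ tends to $1/|\mathcal{G}| \geq 1/4 > \epsilon$, while each $H' \in \mathcal{S} \setminus \mathcal{G}$ has posterior tending to $0$.

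The conclusion then follows by choosing $B$ large enough that both convergences have pushed the relevant posteriors above or below $\epsilon$ as appropriate: with $B_1 \geq B$, Step 3 retains exactly $\mathcal{S}$, which either equals $\mathcal{G}$ (and the algorithm returns immediately) or is refined by Steps 4--6, which for $B_2 \geq B$ prune the survivors down to $\mathcal{G}$.

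The hard part will be ensuring Step 3 actually retains $H_0$ itself under the stated hypothesis $\epsilon < 1/4$. The argument above only guarantees retention when $1/|\mathcal{S}| > \epsilon$, which for $\epsilon$ close to $1/4$ forces $|\mathcal{S}| \leq 4$. Unlike a Gassmann class, a signature class is a priori only a union of Gassmann classes and could in principle exceed $4$ elements. I would verify $|\mathcal{S}| \leq 4$ for every eligible $H_0$ by direct inspection of the pre-computed table of signature distributions of the $280$ eligible subgroups---a finite combinatorial check that can be folded into the algorithm's data preparation. Once this is confirmed, the asymptotic analysis above applies uniformly for all $\epsilon \in (0, 1/4)$ and the proposition follows.
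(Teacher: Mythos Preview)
Your approach is the paper's own---Chebotarev forces the sampled Frobenius statistics to converge, so the Bayesian posteriors concentrate on the correct class---but worked out with far more care via the KL-divergence analysis; the paper's proof is two sentences, noting only that as $B_2 \to \infty$ the sampled distribution converges to the conjugacy-class distribution of the Gassmann class of $\im(\rhobar_{A,3})$ and declaring the result immediate.

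Where you go beyond the paper is in flagging a real subtlety with Step~3. Your analysis shows that for large $B_1$ the signature-step posterior spreads uniformly over the signature-equivalence class $\mathcal{S} \supseteq \mathcal{G}$, so if $|\mathcal{S}| > 4$ and $\epsilon$ is taken near $1/4$, Step~3 discards \emph{every} candidate---including $H_0$---and Steps~4--6 then have nothing left to refine. Your proposed fix, checking $|\mathcal{S}| \le 4$ across all $280$ eligible subgroups, is a legitimate way to close this, but you leave it unexecuted; since the signature is a strictly coarser invariant than the $G_3$-conjugacy class (signature classes are unions of Gassmann classes, and the algorithm's very design anticipates that Step~3 sometimes fails to isolate a single Gassmann class), there is no structural reason the bound should hold. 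If that check fails, the proposition as stated would itself need a tighter ceiling on~$\epsilon$. So either carry out the finite computation, or argue directly from the conjugacy-class step as the paper's terse proof implicitly does---though that reading is in tension with Step~5's phrase ``for each remaining $H \in L$''.
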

\begin{proof}
    By the Chebotarev density theorem, the sampled Frobenius distribution converges to the conjugacy class distribution of the Gassmann-equivalence class containing $\im(\rhobar_{A,3})$, as $B_2 \rightarrow \infty$. The proposition follows immediately.
\end{proof}

\begin{example}
    The largest subgroup of $G_3$ that fails the local-global principle for stabilizing a maximal isotropic subspace of $\F_3^4$ is $H = $ \href{https://www.lmfdb.org/knowledge/show/gsp4.subgroup_data?label=3.1080.4}{3.1080.4}, i.e., every element of $H$ stabilizes some $2$-dimensional isotropic plane, but $H$ does not. Consider the genus $2$ curve with LMFDB label \href{https://www.lmfdb.org/Genus2Curve/Q/25600/f/512000/1}{25600.f.512000.1}. \cref{alg:GassmannClass} returns that the mod-$3$ Galois image for this curve is $H$. So the Jacobian of $C$ presents an example where the local-global principle for the existence of isogenies fails.
\end{example}

\begin{example}
\label{eg:index80subgroup}
Let $H$ denote the stabilizer in $G_3$ of an isotropic plane in $\F_3^4$. It is the group \href{https://www.lmfdb.org/knowledge/show/gsp4.subgroup_data?label=3.40.2}{3.40.2}, and has three index-$2$ subgroups: $H \cap \Sp(4,\F_3)$ and the two eligible subgroups \href{https://www.lmfdb.org/knowledge/show/gsp4.subgroup_data?label=3.80.3}{3.80.3} and \href{https://www.lmfdb.org/knowledge/show/gsp4.subgroup_data?label=3.80.4}{3.80.4}. We note that \href{https://www.lmfdb.org/knowledge/show/gsp4.subgroup_data?label=3.80.3}{3.80.3} is the largest subgroup of $G_3$ that does not occur as the mod-$3$ Galois image for any genus $2$ curve in the LMFDB dataset. Consider the moduli space $X/\Q$ parametrizing $(3,3)$-isogenies of principally polarized abelian surfaces (analog of the modular curve $X_0(3)$), i.e., the moduli space of abelian surfaces $A$ with $\im(\rhobar_{A,3}) \subseteq \href{https://www.lmfdb.org/knowledge/show/gsp4.subgroup_data?label=3.40.2}{3.40.2}$. The three index-$2$ subgroups correspond to three degree-$2$ covers. If $\Q(X)$ denotes the function field of $X$, the function fields of the three covers are respectively $\Q(\sqrt{-3})(X), \Q(X)(\sqrt{-3f})$ and $\Q(X)(\sqrt{f})$ for some $f \in \Q(X)$. Using a birational model of $X$ and the curves in LMFDB whose mod-$3$ Galois image is \href{https://www.lmfdb.org/knowledge/show/gsp4.subgroup_data?label=3.80.4}{3.80.4}, Noam Elkies guessed the function $f$. Then a search for rational points on $X$ where $-3f$ is a square, yields the curve $C: y^2 = -27x^6 + 54x^5 - 693x^4 + 1278x^3 - 543x^2 - 60x - 16$ with conductor $3^2~7^4~13~43^2$. \cref{alg:GassmannClass} verifies that its mod-$3$ Galois image is indeed \href{https://www.lmfdb.org/knowledge/show/gsp4.subgroup_data?label=3.80.3}{3.80.3}.
\end{example}

\begin{remark}
    The output of \cref{alg:GassmannClass} is rigorous only when it is the whole group $G_3$. An effective Chebotarev density theorem \cite{LO77} can be used to make \cref{alg:GassmannClass} completely rigorous, if we sample a very large number of primes. Let $\delta$ be the minimum of the $L^{\infty}$ distance between any two conjugacy class distributions of eligible subgroups of $G_3$. Bounding the error term in Chebotarev density by $\delta/2$ pins down a unique Gassmann-equivalence class. By \cite[Thm 1.1]{LO77}, which assumes GRH for zeta functions of number fields, this can be accomplished by sampling primes up to $x$, if $x > O(\exp(k\delta^{-1}))$ for some explicit constant $k$ and 
    $x$ is greater than the root discriminant of $\Q(A[3])|\Q$. Note that the local root discriminant of $\Q(A[3])|\Q$ at a bad prime $p \ne 2,3$ is less than or equal to $p$. 
\end{remark}

\begin{remark}
    If the abstract isomorphism class of $\Gal(\Q(A[3])|\Q)$ is known, the list $L$ in \cref{alg:GassmannClass} has to contain only those eligible subgroups of $G_3$ in this isomorphism class. This makes the output of \cref{alg:GassmannClass} rigorous in more cases. But whenever this class contains non-conjugate subgroups of $G_3$ meeting the same set of $G_3$-conjugacy classes, the probabilistic nature of \cref{alg:GassmannClass} is unavoidable.
\end{remark}

\subsection{Distinguishing Gassmann-equivalent subgroups}
We begin by noting a canonical example of non-conjugate Gassmann-equivalent subgroups of $G_\ell$.
\begin{example}
    Let 
$J_2 = \begin{bmatrix} & 1\\-1 & \end{bmatrix}$, $\calJ_2 = \begin{bmatrix} & 1\\1 & \end{bmatrix}$. Suppose $\calJ'_2 = \begin{bmatrix} & \calJ_2 \\ -\calJ_2 &  \end{bmatrix}$
represents the standard alternating form on the column space $\F_\ell^4$, that is preserved by $G_\ell$ up to scalars. Let $H_1 \subseteq G_\ell$ be the stabilizer of a vector in $\F_\ell^4$, so $H_1 = \left\{\begin{bmatrix} 1 & B & c\\  & A & D\\  & & x \end{bmatrix} : D = A J_2 B^t, \det(A) = x \right\}$. Let $H_2 \subseteq G_\ell$ be the transpose of $H_1$. Then $H_1$ and $H_2$ are clearly non-conjugate, but they are Gassmann-equivalent \cite[Prop 2.6]{Sut21} since transposing gives a $G_\ell$-conjugacy preserving bijection $H_1 \leftrightarrow H_2$.
\end{example}

In \cite{Sut16}, while computing the mod-$\ell$ Galois image $\im(\rhobar_{E,\ell})$ of an elliptic curve $E/\Q$, Sutherland tackles the problem of distinguishing Gassmann-equivalent subgroups of $\GL(2,\F_\ell)$ by computing the degree of the minimal number field over which $E$ acquires $\ell$-torsion points. For example, if $\im(\rhobar_{E,\ell}) = \begin{bmatrix} 1 & *\\0 & * \end{bmatrix}$, then $E(\Q)$ must have an $\ell$-torsion point, whereas if $\im(\rhobar_{E,\ell}) = \begin{bmatrix} * & *\\0 & 1 \end{bmatrix}$, then the minimal degree of a number field $K$ such that $E(K)$ has an $\ell$-torsion point is $\ell-1$.

We develop this idea further in \cref{alg:distinguish} to distinguish any two Gassmann-equivalent subgroups of $G_3$ that are not conjugate in $\GL(4,\F_3)$. There are exactly $5$ Gassmann-equivalence classes of size $2$, listed in \cref{table:GassmannEquivalentGLConjugate}, containing $\GL(4,\F_3)$-conjugate subgroups.
These are indistinguishable by such methods. Since these groups are small (order $\leq 32$), we distinguish them in \cref{alg:distinguish} by literally constructing a symplectic basis of $A[3]$ over the $3$-torsion field $K$.

\begin{table}[h]
\caption{Non-conjugate Gassmann-equivalent subgroups of $\GSp(4,\F_3)$ that are conjugate in $\GL(4,\F_3)$.}
\label{table:GassmannEquivalentGLConjugate}
\begin{tabular}{|c|c|p{6.3cm}|}
\hline
$|H|$ & Label & Entries in generators read horizontally \\
\hline
32 & \href{https://www.lmfdb.org/knowledge/show/gsp4.subgroup_data?label=3.3240.6}{3.3240.6} & [ 0, 0, 1, 2, 2, 2, 0, 1, 1, 1, 1, 0, 2, 1, 1, 0 ]\newline[ 2, 1, 0, 1, 2, 2, 0, 1, 1, 1, 2, 1, 2, 1, 1, 0 ]\newline[ 1, 1, 2, 1, 2, 0, 0, 2, 1, 2, 0, 2, 2, 1, 1, 2 ]\\
32 & \href{https://www.lmfdb.org/knowledge/show/gsp4.subgroup_data?label=3.3240.7}{3.3240.7} & [ 2, 0, 0, 0, 1, 0, 2, 0, 2, 2, 0, 0, 1, 2, 2, 1 ]\newline[ 1, 0, 2, 1, 0, 1, 1, 2, 0, 0, 2, 0, 0, 0, 0, 2 ]\newline[ 2, 1, 2, 0, 2, 2, 0, 2, 1, 1, 1, 2, 2, 1, 1, 1 ]\\
\hline
16 & \href{https://www.lmfdb.org/knowledge/show/gsp4.subgroup_data?label=3.6480.16}{3.6480.16} & [ 2, 0, 0, 1, 0, 1, 0, 0, 0, 1, 2, 0, 0, 0, 0, 1 ]\newline[ 0, 2, 1, 0, 1, 2, 1, 1, 2, 0, 0, 1, 1, 2, 2, 2 ]\\
16 & \href{https://www.lmfdb.org/knowledge/show/gsp4.subgroup_data?label=3.6480.3}{3.6480.3} & [ 0, 2, 1, 0, 2, 1, 2, 1, 1, 1, 2, 1, 2, 1, 1, 0 ]\newline[ 2, 2, 1, 0, 0, 1, 1, 1, 0, 0, 2, 1, 0, 0, 0, 1 ]\\
\hline
16 & \href{https://www.lmfdb.org/knowledge/show/gsp4.subgroup_data?label=3.6480.13}{3.6480.13} & [ 1, 2, 1, 0, 1, 1, 0, 1, 2, 2, 2, 1, 1, 2, 2, 2 ]\newline[ 1, 0, 2, 1, 0, 1, 1, 2, 0, 0, 2, 0, 0, 0, 0, 2 ]\\
16 & \href{https://www.lmfdb.org/knowledge/show/gsp4.subgroup_data?label=3.6480.17}{3.6480.17} & [ 0, 1, 2, 2, 2, 2, 2, 2, 1, 1, 1, 2, 0, 1, 1, 0 ]\newline[ 2, 0, 2, 1, 2, 1, 2, 2, 1, 1, 2, 0, 2, 1, 1, 1 ]\\
\hline
16 & \href{https://www.lmfdb.org/knowledge/show/gsp4.subgroup_data?label=3.6480.14}{3.6480.14} & [ 2, 1, 0, 1, 0, 2, 0, 0, 2, 2, 1, 2, 0, 2, 0, 1 ]\newline[ 2, 0, 1, 2, 2, 1, 2, 1, 0, 0, 2, 0, 0, 0, 1, 1 ]\newline[ 1, 1, 1, 0, 2, 2, 0, 2, 2, 0, 2, 1, 0, 1, 2, 1 ]\\
16 & \href{https://www.lmfdb.org/knowledge/show/gsp4.subgroup_data?label=3.6480.15}{3.6480.15} & [ 1, 2, 0, 2, 0, 1, 0, 0, 1, 1, 2, 1, 0, 1, 0, 2 ]\newline[ 2, 0, 1, 2, 1, 0, 0, 1, 2, 2, 0, 0, 1, 2, 2, 1 ]\newline[ 2, 0, 1, 1, 1, 1, 1, 1, 2, 1, 2, 0, 1, 2, 2, 1 ]\\
\hline
8 & \href{https://www.lmfdb.org/knowledge/show/gsp4.subgroup_data?label=3.12960.5}{3.12960.5} & [ 2, 0, 0, 1, 0, 1, 0, 0, 0, 1, 2, 0, 0, 0, 0, 1 ]\newline[ 1, 0, 2, 1, 2, 0, 0, 2, 1, 1, 0, 0, 2, 1, 1, 2 ]\\
8 & \href{https://www.lmfdb.org/knowledge/show/gsp4.subgroup_data?label=3.12960.11}{3.12960.11} & [ 2, 2, 1, 0, 1, 1, 1, 1, 2, 0, 2, 1, 1, 2, 2, 1 ]\newline[ 0, 2, 1, 1, 1, 1, 1, 1, 2, 2, 2, 1, 0, 2, 2, 0 ]\\
\hline
\end{tabular}
\end{table}

We set up some notation before describing the algorithm. Let $\calK = A/\{\pm\}$ denote the Kummer surface of $A$, and let $\calX$ denote the image of $A[3]$ under the quotient map. Explicit formulas are known for doubling on $\calK$ \cite[Thm 3.4.1]{cassels_flynn_1996}. Using these, equations for the $3$-torsion locus $\calX \subset \calK$ can be computed, as in \cite{CCG}, by writing $2 P = - P$ in terms of the coordinates of an arbitrary point $P \in \calK$. Although taking successive resultants of these equations can yield a $3$-division polynomial, knowing a general $3$-division polynomial at the outset makes \ref{step:bruteforce} and \ref{step:dimthreetorsovernfield} of \cref{alg:distinguish} faster. We use such a polynomial given to us by David Roberts.
\begin{algorithm}
    \label{alg:distinguish}
    {\em Input:}
    \begin{itemize}
        \item a genus 2 curve $C$ over $\Q$ such that $A = \Jac(C)$
        \item a Gassmann-equivalence class $L$ of subgroups of $G_3$ containing $\im (\rhobar_{A,3})$.
    \end{itemize}
    {\em Output:} the image of $\rhobar_{A,3}$ up to conjugacy in $G_3$.
    \begin{enumerate}[label={\bf Step \arabic*},labelwidth=\widthof{{\bf Step 2.}},leftmargin=!]
    \item \label{step:bruteforce} If $L$ is a class given in \cref{table:GassmannEquivalentGLConjugate}, compute the three-torsion field $K = \Q(A[3])$ and its automorphism group $\Gal(K|\Q)$. Find all $41$ geometric points on the three-torsion locus $\calX$, i.e., compute $\calX(\Qbar) = \calX(K)$, and lift points to find a basis of $A[3]$. Fix a symplectic basis by working in some residue field. Compute matrices with respect to this basis corresponding to the action of the generators of $\Gal(K|\Q)$, thus determining $\im(\rhobar)$.
    \item \label{step:maxptsoverext} Otherwise, for each $H \in L$, compute $\dim (\F_3^4)^{H_0}$ where $H_0 = H \cap \Sp(4,\F_3)$. Also compute $\max\limits_{[H:H_1] = d} \dim (\F_3^4)^{H_1}$ for each $d \in D = \{1,2,3,6,8,12\}$.
    \item \label{step:dimthreetorsovernfield} Compute $\calX(\Q(\zeta_3))$, and the points in $\calX$ whose degree belongs to the set $D$. Lift them to $3$-torsion points on $A$, and thus compute $\dim_{\F_3} (A[3](\Q(\zeta_3)))$, and $\max\limits_{[K:\Q] = d} \dim_{\F_3}(A[3](K))$ for each $d \in D$.
    \item If there is a unique group $H \in L$, whose data computed in \ref{step:maxptsoverext} matches the data computed in \ref{step:dimthreetorsovernfield}, return it.
    \end{enumerate}
\end{algorithm}

\begin{theorem}
\label{thm:distinguish}
Given any genus 2 curve $C/\Q$ with $A = \Jac(C)$, and the Gassmann-equivalence class $L$ containing $\im(\rhobar_{A,3})$, \cref{alg:distinguish} returns $\im(\rhobar_{A,3})$.
\end{theorem}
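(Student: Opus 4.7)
The proof naturally splits along the two branches of \cref{alg:distinguish}, according to whether $L$ appears in \cref{table:GassmannEquivalentGLConjugate}. In the first branch, the subgroups of $L$ are $\GL(4,\F_3)$-conjugate, so \emph{no} invariant depending only on $A[3]$ as an $\F_3$-vector-space representation can separate them: one must witness the symplectic structure explicitly. This is precisely what \ref{step:bruteforce} does. The three-torsion field $K = \Q(A[3])$ is the splitting field of a specialization of the general $3$-division polynomial and can be computed; the $41$ geometric points of $\calX$ are cut out over $K$ by the equation $2P = -P$ on $\calK$; lifting them along $A \to \calK$ produces an $\F_3$-basis of $A[3]$, and the Weil pairing (computable modulo any residue field) lets us adjust to a symplectic one. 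The action of generators of $\Gal(K|\Q)$ on this basis yields the matrix representation of $\rhobar$ in $G_3$, and two such representations are $G_3$-conjugate iff they differ by a symplectic change of basis, so $\im(\rhobar)$ is recovered up to $G_3$-conjugacy, separating the members of $L$.

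In the second branch, Steps 2--4 match group-theoretic with arithmetic data. Fix any $\F_3$-basis of $A[3]$, so that $\im(\rhobar)$ is $G_3$-conjugate to a unique $H \in L$. For any subfield $K_1 \subseteq K$ of degree $d$ over $\Q$, the subgroup $H_1 := \rhobar(\Gal(\overline\Q|K_1)) \leq H$ has index $d$ and
\[
A[3](K_1) \;\cong\; (\F_3^4)^{H_1}
\]
by Galois theory. As $K_1$ varies over the degree-$d$ subfields of $K$, $H_1$ varies over the index-$d$ subgroups of $H$, so the Step~3 maximum equals the Step~2 maximum; the fields needed are realized as residue fields of closed points of $\calX$ of the appropriate degrees. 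Specializing $K_1 = \Q(\zeta_3)$ (which lies in $K$ because $\chi_{\tsim}\circ\rhobar = \chi_3$) gives $H_1 = H\cap\Sp(4,\F_3) = H_0$, matching the first invariant. Hence the tuple read off $C$ in \ref{step:dimthreetorsovernfield} agrees with the tuple computed from $H = \im(\rhobar)$ in \ref{step:maxptsoverext}.

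The heart of the argument, and the main obstacle, is the \emph{finite} claim that for every Gassmann-equivalence class $L$ \emph{not} in \cref{table:GassmannEquivalentGLConjugate}, the tuple
\[
\Bigl(\dim(\F_3^4)^{H_0},\ \bigl(\max_{[H:H_1]=d}\dim(\F_3^4)^{H_1}\bigr)_{d\in D}\Bigr)
\]
takes pairwise distinct values as $H$ ranges over $L$. There is no structural reason a priori why these invariants should separate Gassmann-equivalent subgroups, and the choice $D = \{1,2,3,6,8,12\}$ is dictated exactly by making the separation go through. One must therefore enumerate the $33$ pairs, $3$ triples, and $2$ quadruples of non-Table-1 Gassmann-equivalent subgroups of $G_3$ and check pairwise distinctness of the tuple directly, e.g.\ in Magma. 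Given this finite verification, uniqueness of the matching $H$ in \ref{step:maxptsoverext} is immediate and the theorem follows.
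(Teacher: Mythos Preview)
Your proposal is correct and follows the same approach as the paper's proof: both reduce the theorem to a finite verification that the tuple of invariants in \ref{step:maxptsoverext} separates every Gassmann-equivalence class not listed in \cref{table:GassmannEquivalentGLConjugate}. The paper records this verification explicitly in Tables~\ref{table:distinguishGLnonconjugate_smalldegree} and~\ref{table:distinguishGLnonconjugate_largedegree} (and notes that for the remaining classes already $\dim(\F_3^4)^H$ suffices), whereas you phrase it as a Magma computation; your added paragraph justifying why the arithmetic data of \ref{step:dimthreetorsovernfield} matches the group-theoretic data of \ref{step:maxptsoverext} is left implicit in the paper.
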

\begin{proof}
Suppose $L$ is a Gassmann-equivalence class of size $> 1$ not listed in \cref{table:GassmannEquivalentGLConjugate}. If $L$ appears in Tables \ref{table:distinguishGLnonconjugate_smalldegree} or \ref{table:distinguishGLnonconjugate_largedegree}, the corresponding set of indices $d$ shown in the table distinguish all subgroups appearing in $L$.
Otherwise, the subgroups in $L$ are distinguished by the dimension of their fixed spaces.
\end{proof}

\begin{example}
    Let $H$ denote the stabilizer in $G_3$ of an line in $\F_3^4$. It is the group \href{https://www.lmfdb.org/knowledge/show/gsp4.subgroup_data?label=3.40.1}{3.40.1}, and two of its index-$2$ subgroups \{\href{https://www.lmfdb.org/knowledge/show/gsp4.subgroup_data?label=3.80.1}{3.80.1}, \href{https://www.lmfdb.org/knowledge/show/gsp4.subgroup_data?label=3.80.2}{3.80.2}\} form a Gassmann-equivalence class. While \href{https://www.lmfdb.org/knowledge/show/gsp4.subgroup_data?label=3.80.1}{3.80.1} is the stabilizer of a point, \href{https://www.lmfdb.org/knowledge/show/gsp4.subgroup_data?label=3.80.2}{3.80.2} acts trivially on a $1$-dimensional quotient of $\F_3^4$.
    Consider the genus $2$ curve $C$ with LMFDB label \href{https://www.lmfdb.org/Genus2Curve/Q/277/a/277/2}{277.a.277.2}. Running \cref{alg:GassmannClass} on $C$ returns the correct Gassmann-equivalence class. Further running \cref{alg:distinguish} returns that the mod-$3$ Galois image is the group \href{https://www.lmfdb.org/knowledge/show/gsp4.subgroup_data?label=3.80.2}{3.80.2}. So the local-global principle for the existence of a $3$-torsion point fails for $\Jac(C)$. This is consistent with Mordell-Weil group $\Jac(C)(\Q) \simeq \Z/5$.
\end{example}

\begin{table}[h]
\caption{Gassmann-equivalent subgroups distinguished by $\dim (\F_3^4)^{H_0}$ and $\max\limits_{[H:H_1] = d} \dim (\F_3^4)^{H_1}$ for $d \leq 3$.}
\label{table:distinguishGLnonconjugate_smalldegree}
\begin{tabular}{|c|c|p{6.3cm}|c|c|c|c|}
\hline
$|H|$ & Label & Entries in generators read horizontally & $H_0$ & d=1 & 2 & 3\\
\hline
324 & \href{https://www.lmfdb.org/knowledge/show/gsp4.subgroup_data?label=3.320.1}{3.320.1}& [ 2, 2, 2, 0, 2, 1, 1, 0, 2, 0, 1, 2, 2, 0, 2, 2 ]\newline[ 1, 0, 2, 0, 2, 1, 1, 2, 2, 2, 0, 1, 2, 1, 0, 1 ] & 1&1&1&1\\
& \href{https://www.lmfdb.org/knowledge/show/gsp4.subgroup_data?label=3.320.2}{3.320.2}& [ 1, 1, 1, 2, 0, 0, 2, 1, 2, 0, 0, 1, 1, 1, 2, 2 ]\newline[ 0, 1, 0, 1, 2, 1, 2, 1, 1, 2, 1, 0, 0, 2, 2, 1 ] & 1&0&1&0\\
& \href{https://www.lmfdb.org/knowledge/show/gsp4.subgroup_data?label=3.320.5}{3.320.5}& [ 1, 2, 1, 1, 0, 1, 2, 0, 0, 0, 2, 2, 0, 0, 0, 2 ]\newline[ 2, 2, 0, 0, 2, 1, 2, 1, 0, 0, 1, 2, 1, 1, 2, 2 ] & 0&0&1&1\\
& \href{https://www.lmfdb.org/knowledge/show/gsp4.subgroup_data?label=3.320.6}{3.320.6}& [ 1, 2, 1, 1, 1, 0, 2, 0, 0, 2, 1, 0, 2, 2, 1, 1 ]\newline[ 1, 0, 2, 1, 1, 1, 0, 0, 1, 0, 0, 1, 1, 1, 2, 1 ] & 0&0&1&0\\
\hline
162 & \href{https://www.lmfdb.org/knowledge/show/gsp4.subgroup_data?label=3.640.1}{3.640.1}& [ 2, 2, 0, 0, 2, 0, 1, 2, 0, 2, 0, 0, 1, 2, 0, 2 ]\newline[ 0, 0, 1, 2, 1, 1, 0, 1, 0, 0, 2, 0, 2, 0, 2, 0 ] & 1&1&1&2\\
& \href{https://www.lmfdb.org/knowledge/show/gsp4.subgroup_data?label=3.640.2}{3.640.2}& [ 0, 1, 0, 1, 0, 1, 0, 2, 2, 2, 2, 1, 1, 1, 2, 1 ]\newline[ 1, 1, 1, 0, 0, 0, 2, 1, 0, 1, 2, 2, 0, 0, 0, 1 ]\newline[ 2, 2, 2, 1, 0, 0, 1, 2, 0, 1, 0, 1, 0, 0, 0, 1 ] & 1&1&1&1\\
& \href{https://www.lmfdb.org/knowledge/show/gsp4.subgroup_data?label=3.640.3}{3.640.3}& [ 2, 2, 0, 2, 1, 1, 1, 0, 1, 2, 1, 0, 1, 2, 0, 2 ]\newline[ 2, 2, 0, 2, 0, 2, 1, 2, 1, 2, 1, 0, 2, 0, 2, 2 ] & 1&0&1&0\\
& \href{https://www.lmfdb.org/knowledge/show/gsp4.subgroup_data?label=3.640.4}{3.640.4}& [ 0, 0, 2, 2, 0, 0, 2, 1, 2, 1, 1, 2, 1, 1, 2, 2 ]\newline[ 2, 1, 2, 1, 2, 2, 0, 0, 0, 1, 2, 0, 1, 1, 2, 1 ]\newline[ 2, 1, 2, 2, 2, 2, 0, 1, 0, 0, 1, 1, 1, 2, 0, 2 ] & 1&0&1&1\\
\hline
36 & \href{https://www.lmfdb.org/knowledge/show/gsp4.subgroup_data?label=3.2880.13}{3.2880.13}& [ 0, 0, 1, 2, 2, 2, 2, 1, 1, 1, 1, 0, 0, 1, 1, 0 ]\newline[ 2, 0, 0, 0, 0, 2, 0, 0, 1, 1, 1, 0, 2, 1, 0, 1 ]\newline[ 2, 1, 1, 1, 2, 2, 2, 1, 2, 1, 2, 2, 2, 2, 1, 2 ] & 0&0&2&0\\
& \href{https://www.lmfdb.org/knowledge/show/gsp4.subgroup_data?label=3.2880.17}{3.2880.17}& [ 1, 2, 1, 0, 1, 1, 0, 1, 0, 1, 0, 1, 2, 0, 2, 0 ]\newline[ 0, 1, 0, 2, 1, 2, 2, 0, 0, 2, 0, 2, 2, 0, 2, 2 ]\newline[ 2, 2, 0, 2, 2, 1, 2, 0, 0, 1, 2, 1, 1, 0, 1, 1 ] & 0&0&1&0\\
\hline
18 & \href{https://www.lmfdb.org/knowledge/show/gsp4.subgroup_data?label=3.5760.2}{3.5760.2}& [ 1, 1, 1, 2, 0, 0, 2, 1, 0, 2, 0, 2, 0, 0, 0, 2 ]\newline[ 2, 1, 2, 0, 0, 0, 2, 2, 1, 0, 2, 2, 2, 1, 0, 0 ] & 2&1&2&2\\
& \href{https://www.lmfdb.org/knowledge/show/gsp4.subgroup_data?label=3.5760.5}{3.5760.5} & [ 2, 1, 2, 0, 0, 0, 2, 2, 1, 0, 2, 2, 2, 1, 0, 0 ]\newline[ 1, 2, 1, 2, 1, 0, 2, 1, 0, 1, 0, 1, 2, 0, 2, 2 ] & 2&1&2&1\\
\hline
12 & \href{https://www.lmfdb.org/knowledge/show/gsp4.subgroup_data?label=3.8640.2}{3.8640.2} & [ 1, 0, 0, 0, 0, 1, 0, 0, 1, 1, 2, 0, 2, 1, 0, 2 ]\newline[ 1, 2, 2, 0, 2, 2, 1, 2, 2, 1, 2, 1, 0, 2, 1, 0 ] & 0&0&2&0\\
& \href{https://www.lmfdb.org/knowledge/show/gsp4.subgroup_data?label=3.8640.4}{3.8640.4} & [ 0, 2, 1, 1, 1, 0, 2, 1, 1, 2, 0, 1, 1, 1, 2, 0 ]\newline[ 2, 1, 0, 0, 0, 1, 0, 0, 0, 0, 2, 2, 0, 0, 0, 1 ] & 0&0&1&0\\
\hline
12 & \href{https://www.lmfdb.org/knowledge/show/gsp4.subgroup_data?label=3.8640.12}{3.8640.12} & [ 1, 1, 0, 0, 0, 2, 0, 0, 2, 0, 1, 2, 1, 2, 0, 2 ]\newline[ 2, 0, 1, 1, 0, 2, 1, 1, 2, 1, 1, 0, 1, 2, 0, 1 ] & 0&0&2&0\\
& \href{https://www.lmfdb.org/knowledge/show/gsp4.subgroup_data?label=3.8640.13}{3.8640.13} & [ 2, 1, 1, 1, 0, 0, 1, 1, 1, 1, 1, 2, 2, 1, 0, 2 ]\newline[ 2, 0, 2, 1, 2, 1, 0, 2, 0, 0, 2, 0, 0, 0, 1, 1 ] & 0&0&1&0\\
\hline
\end{tabular}
\end{table}

\begin{table}[h]
\caption{Gassmann-equivalent subgroups distinguished by 
$\max\limits_{[H:H_1] = d} \dim (\F_3^4)^{H_1}$
for $d = 6, 8, 12$.}
\label{table:distinguishGLnonconjugate_largedegree}
\begin{tabular}{|c|c|p{6.3cm}|c|c|c|}
\hline
$|H|$ & Label & Entries in generators read horizontally & d=6 & 8 & 12 \\
\hline
432 & \href{https://www.lmfdb.org/knowledge/show/gsp4.subgroup_data?label=3.240.6}{3.240.6} & [ 0, 0, 0, 2, 0, 1, 2, 2, 0, 0, 2, 0, 2, 0, 1, 0 ]\newline[ 1, 0, 0, 2, 0, 2, 1, 0, 0, 0, 2, 0, 2, 0, 0, 2 ] & 0&1&0\\
& \href{https://www.lmfdb.org/knowledge/show/gsp4.subgroup_data?label=3.240.7}{3.240.7} & [ 2, 0, 1, 1, 0, 1, 1, 2, 0, 0, 2, 0, 2, 0, 1, 2 ]\newline[ 0, 0, 2, 2, 1, 2, 0, 2, 0, 0, 1, 0, 2, 0, 2, 0 ] & 0&1&1\\
\hline
324 & \href{https://www.lmfdb.org/knowledge/show/gsp4.subgroup_data?label=3.320.3}{3.320.3} & [ 1, 1, 0, 2, 1, 0, 2, 0, 0, 2, 1, 2, 2, 0, 2, 0 ]\newline[ 2, 0, 0, 1, 2, 1, 1, 1, 0, 1, 0, 0, 1, 2, 0, 0 ] & 2&-&2\\
& \href{https://www.lmfdb.org/knowledge/show/gsp4.subgroup_data?label=3.320.4}{3.320.4} & [ 1, 2, 2, 1, 2, 1, 2, 0, 2, 0, 0, 2, 2, 0, 2, 2 ]\newline[ 1, 1, 1, 2, 2, 1, 2, 0, 2, 0, 0, 1, 2, 1, 0, 1 ]\newline[ 2, 1, 1, 1, 1, 2, 1, 1, 1, 1, 1, 2, 1, 1, 2, 1 ] & 1&-&2\\
\hline
48 & \href{https://www.lmfdb.org/knowledge/show/gsp4.subgroup_data?label=3.2160.9}{3.2160.9} & [ 1, 1, 1, 1, 0, 2, 1, 1, 0, 0, 1, 2, 0, 0, 0, 2 ]\newline[ 0, 2, 0, 1, 2, 1, 1, 0, 2, 0, 1, 0, 2, 1, 0, 2 ] & 1&1&2\\
& \href{https://www.lmfdb.org/knowledge/show/gsp4.subgroup_data?label=3.2160.10}{3.2160.10} & [ 0, 2, 1, 0, 1, 0, 0, 1, 0, 1, 2, 2, 2, 1, 0, 2 ]\newline[ 2, 2, 2, 0, 0, 2, 0, 2, 1, 0, 0, 0, 2, 0, 2, 0 ] & 1&2&2\\
\hline
\end{tabular}
\end{table}

\section{Challenges in computing mod-$\ell$ Galois image for $\ell > 3$}
\label{sec:ellgt3}
For any prime $\ell > 3$, \cref{alg:GassmannClass} still works in principle to determine the Gassmann-equivalence class of $\im(\rhobar_{A,\ell})$. The precomputation of all eligible subgroups of $G_\ell$ becomes hard for large $\ell$, but for practical purposes, it is enough to compute only those subgroups whose index is less than a well-chosen bound.

But the problem of distinguishing Gassmann-equivalent subgroups gets considerably harder. Firstly, there are many more Gassmann-equivalent subgroups, for eg., 
$G_5$ has $1125$ eligible subgroups, but they give rise to only $773$ distinct conjugacy class distributions.
Secondly, getting information about $A[\ell]$ as in \ref{step:dimthreetorsovernfield} of \cref{alg:distinguish} is a lot more challenging. For eg., computing equations for the $\ell$-torsion locus on the Kummer surface seems hard already for $\ell = 7$.
Thirdly, there are fairly big Gassmann-equivalent subgroups of $G_\ell$ that are conjugate inside $\GL(4,\F_\ell)$. For eg., there is such a pair of subgroups \{\href{https://www.lmfdb.org/knowledge/show/gsp4.subgroup_data?label=5.48750.5}{5.48750.5}, \href{https://www.lmfdb.org/knowledge/show/gsp4.subgroup_data?label=5.48750.6}{5.48750.6}\} of $G_5$ of order $768$.

We can overcome some of these difficulties by taking endomorphisms into account. 
They constrain Galois action leading to fewer candidates for the Galois image. Also, for fixed endomorphism data, having a very small image is rare. An upcoming work of the author will fully address the case of no extra endomorphisms.

\section{Towards explicitly realizing all mod-$3$ Galois images}
\label{sec:realize}

As mentioned in the introduction, \cite{BF22} yields in theory a way to explicitly realize any of the $280$ eligible subgroups $H$ of $G_3$ as the mod-$3$ Galois image of the Jacobian of a genus $2$ curve over $\Q$. Given such a subgroup, one first obtains a number field $K$ properly solving the embedding problem
\begin{equation}
\label{eq:embeddingproblem}
\begin{tikzcd}
    & & & G_{\Q} \arrow{d}{\chi_3} \arrow[dashed]{dl}[swap]{\rhobar}& \\
    0 \arrow{r} & H \cap \Sp(4,\F_3) \arrow{r} & H \arrow{r} & \Z/2 \arrow{r} & 0
\end{tikzcd}
\end{equation}
i.e., $\rhobar$ is surjective and $K$ is the fixed field of $\ker (\rhobar)$. Since $G_3$ is a split extension of $\Z/2$ by $\Sp(4,\F_3)$, this embedding problem is always solvable and such a number field $K$ exists. Then \cite{BF22} gives a recipe to construct the corresponding twisted Burkhardt quartic threefold $B$, and associate to the rational points on $B$ certain genus $2$ curves. Finally, we make a suitable quadratic twist to
ensure that the mod-$3$ Galois image of the Jacobian is exactly equal to $H$. 
Here is an illustrative example to conclude.
\begin{example}
\label{eg:verysmallimage}
    Consider the subgroup $H \subset G_3$ with label \href{https://www.lmfdb.org/knowledge/show/gsp4.subgroup_data?label=3.12960.9}{3.12960.9}. It is isomorphic to $(\Z/2)^3$. We use the \texttt{BurkhardtModel} intrinsic with input $K = \Q(\zeta_{24})$ and an isomorphism $\rhobar: \Gal(K|\Q) \simeq H$ solving \cref{eq:embeddingproblem}, to find a model for the corresponding twisted Burkhardt quartic threefold $B$. We then follow the recipe in \cite{BF22} with a random rational point on $B$. This produces the quadratic twist by $2$ of the curve $C: y^2 = x(x^4 - 6840 x^2 + 456976)$,
    and \cref{alg:GassmannClass} verifies that its mod-$3$ Galois image is indeed $H$. The Jacobian of $C$ is isogenous to the product of the elliptic curve \href{https://www.lmfdb.org/EllipticCurve/Q/14/a/3}{14.a3} and its twist by $-1$. It has conductor $2^5 7^2 = 1568$, mod-$3$ image \href{https://www.lmfdb.org/knowledge/show/gsp4.subgroup_data?label=3.25920.3}{3.25920.3} and $3$-torsion field $\Q(\zeta_{12})$.
    The curve $C$ is not currently in LMFDB.
\end{example}

\bibliographystyle{amsplain}
\bibliography{Mod3GaloisImage.bib}

\end{document}